\newtheorem{thm}{Theorem}[section]
\newtheorem{lem}[thm]{Lemma}
\newcommand{\lcm}{\operatorname{lcm}}
\begin{document}
\title[Groups with Strong Symmetric Genus up to 25]{Classification of Groups with Strong Symmetric Genus up to Twenty-Five}

\author[Fieldsteel]{Nathan Fieldsteel}
\email{fieldst2@illinois.edu}
\address{Department of Mathematics, University of Illinois at Urbana-Champaign,  1419 W. Green St, Urbana, IL 61801}

\author[Lindberg]{Tova Lindberg}
\email{tlindberg@math.arizona.edu}
\address{Department of Mathematics, The University of Arizona, Box 210089, Tucson, AZ 85719-0089}

\author[London]{Tyler London}
\email{tyler.a.london@gmail.com}

\author[Tran]{Holden Tran}
\email{holdentran2007@u.northwestern.edu}

\author[Xu]{Haokun Xu}
\email{xuh@math.ucla.edu}
\address{Department of Mathematics, University of California, Los Angeles, Box
951555, Los Angeles, CA 90095-1555}

\date{24 March 2011 \\
\indent\footnotesize{2000 \textit{Mathematics Subject Classification}. Primary 57M60; Secondary 20H10, 30F99.} \\
\indent\footnotesize{\textit{Key words and phrases.} Group actions on surfaces, strong symmetric genus, Riemann surface.}}

\begin{abstract}
The strong symmetric genus of a finite group $G$ is the minimum genus of a compact Riemann surface on which $G$ acts as a group of automorphisms preserving orientation. A characterization of the infinite number of groups with strong symmetric genus zero and one is well-known and the problem is finite for each strong symmetric genus greater than or equal to two. May and Zimmerman have published papers detailing the classification of all groups with strong symmetric genus two through four. Using the computer algebra system {\sf GAP}, we extend these classifications to all groups of strong symmetric genus up to twenty-five.  This paper outlines the approach used for the extension.
\end{abstract}

\maketitle
%\newpage

\section{Introduction}

The strong symmetric genus is a topological concept that can be explored using computational group theory.  Suppose $G$ is a finite group.  The \emph{strong symmetric genus} of $G$, denoted $\sigma^0(G)$, is the minimum genus of a compact Riemann surface on which $G$ acts faithfully as a group of automorphisms while preserving orientation.  An interesting question to ask is exactly which groups act on a compact Riemann surface of genus $g$ in this way, and on no surfaces of smaller genus.  These groups are said to have strong symmetric genus $g$.  

There are an infinite number of groups with $g=0$ and $g=1$, and a finite number of groups for all $g \geq 2$. Groups of strong symmetric genus zero and one are addressed in \cite[$\S6.3$]{gross} and \cite[$\S3.4$]{Breuer}. All groups of strong symmetric genus zero have the form $C_{n}$ or $D_{2n}$ for all positive integers $n$, $S_{4}$, $A_{4}$, or $A_{5}$.  Groups of strong symmetric genus one have no such clearly-defined classification, but they can all be obtained as a quotient of one of five particular Fuchsian groups, as will be discussed later.  In \cite{Zimm1}, May and Zimmermann classified all groups with strong symmetric genus two and three. Later, they extended this classification to all groups with strong symmetric genus four in \cite{Zimm2}. This work by May and Zimmerman was done essentially without the use of computer software.

Our research extends the classifications made by May and Zimmermann, using the computer algebra system {\sf GAP} \cite{GAP4}.  We have written a procedure using the {\sf GAP} Small Groups Library \cite{bes} that has allowed us to classify all groups of strong symmetric genus up to 25. The classification given by this {\sf GAP} procedure agrees with the previously known classifications for strong symmetric genus up through four.

\section{Preliminaries}
\subsection{Groups of Automorphisms of Compact Riemann Surfaces}
For our purposes, a Riemann surface will be the orbit space of a suitable group action. In particular, we will only consider the action of certain Fuchsian groups on the upper half plane $\mathbb{H}$ which force the Riemann surfaces in consideration to be compact. Recall that a\textit{Fuchsian surface group}, denoted by $K$ for the remainder of this paper, is a torsion-free Fuchsian group. The following result shows exactly when a finite group acts as a group of automorphisms of a compact Riemann surface.

\begin{thm}
\cite[5.9.5]{Jones} A finite group $G$ acts as a group of automorphisms of some compact Riemann surface of genus $g\geq2$, if and only if $G$ is isomorphic to $\Gamma/K$ where $\Gamma$ is a Fuchsian group with compact orbit space, and $K$ is a Fuchsian surface group with orbit genus $g$ (the genus of the Riemann surface $\mathbb{H}/K$) that is a normal subgroup of $\Gamma$.
\end{thm}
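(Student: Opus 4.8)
The plan is to derive this from the Uniformization Theorem together with standard covering-space and lifting arguments. The genus hypothesis $g \geq 2$ is exactly what guarantees that the universal cover of the surface in question is $\mathbb{H}$ (rather than $\widehat{\mathbb{C}}$ or $\mathbb{C}$, which occur for genus $0$ and $1$), so that every relevant group sits inside $\Aut(\mathbb{H}) = \PSL_2(\mathbb{R})$, the group of orientation-preserving automorphisms of $\mathbb{H}$. This is why the statement is restricted to $g \geq 2$ and why the orientation-preserving hypothesis is essential (dropping it would replace $\PSL_2(\mathbb{R})$ by its index-two extension by an anticonformal involution).

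For the backward implication, suppose $G \cong \Gamma/K$ with $\Gamma$ a cocompact Fuchsian group and $K \trianglelefteq \Gamma$ a Fuchsian surface group with $\mathbb{H}/K$ of genus $g$. Put $X = \mathbb{H}/K$, a compact Riemann surface of genus $g \geq 2$. Since $K$ is torsion-free it acts freely on $\mathbb{H}$, so $X$ carries the quotient complex structure and $\Gamma/K$ acts on $X$ by biholomorphisms, hence orientation-preservingly, via $(\gamma K)\cdot(Kz) = K\gamma z$. First I would check faithfulness: if $\gamma K$ acts trivially then $\gamma z \in Kz$ for every $z$, so the closed sets $\{z : \gamma z = kz\}$ (as $k$ ranges over the countable group $K$) cover $\mathbb{H}$; by the Baire category theorem one of them has nonempty interior, and since $\gamma$ and $k$ are holomorphic on the connected set $\mathbb{H}$ they must coincide, so $\gamma = k \in K$. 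Next I would confirm $G$ is finite by comparing hyperbolic areas: $\operatorname{area}(\mathbb{H}/K) = [\Gamma:K]\cdot\operatorname{area}(\mathbb{H}/\Gamma)$ with both areas finite and positive, forcing $[\Gamma:K] = |G| < \infty$. Thus $G$ acts faithfully, orientation-preservingly, on a compact Riemann surface of genus $g$.

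For the forward implication, let $G \leq \Aut(X)$ act faithfully and orientation-preservingly (i.e. biholomorphically) on a compact Riemann surface $X$ with $g \geq 2$. By the Uniformization Theorem, $X \cong \mathbb{H}/K$ for a Fuchsian surface group $K \cong \pi_1(X)$ realized as the deck group of the universal covering $\mathbb{H} \to X$. Each $\phi \in G$ lifts through this covering to a biholomorphism of $\mathbb{H}$, that is, to an element of $\PSL_2(\mathbb{R})$, unique up to left multiplication by $K$. I would let $\Gamma \leq \PSL_2(\mathbb{R})$ be the set of all such lifts of all elements of $G$; routine checks (composites and inverses of lifts are lifts, elements of $K$ lift the identity, conjugates of $K$ by lifts land in $K$) show $\Gamma$ is a subgroup containing $K$ as a normal subgroup with $\Gamma/K \cong G$. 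Then I would argue $\Gamma$ is discrete: $K$ is discrete, hence closed, in $\PSL_2(\mathbb{R})$, so $\Gamma$, a union of the finitely many cosets of $K$, is closed, and being countable it is discrete; thus $\Gamma$ is Fuchsian. Finally $\mathbb{H}/\Gamma = (\mathbb{H}/K)/G = X/G$ is compact because $X$ is compact and $G$ finite, so $\Gamma$ has compact orbit space, and $K$ is the required Fuchsian surface group, normal in $\Gamma$, with $\mathbb{H}/K$ of genus $g$.

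The heaviest tool, the Uniformization Theorem (specifically that $\mathbb{H}$ is the universal cover precisely when $g \geq 2$), I would invoke as a black box. Beyond that, the only genuinely delicate points are the verification of faithfulness in the backward direction and the discreteness of $\Gamma$ in the forward direction; both reduce to the rigidity of holomorphic maps on a connected domain plus elementary point-set topology of discrete subgroups of a Lie group, so I do not expect either to present a serious obstacle.
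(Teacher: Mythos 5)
The paper does not prove this statement; it quotes it from Jones--Singerman \cite[5.9.5]{Jones}, so there is no in-paper proof to compare against. Your outline is the standard uniformization-and-lifting argument given in that source and it is essentially correct: the backward direction via the induced action of $\Gamma/K$ on $\mathbb{H}/K$, with faithfulness following from the rigidity of M\"obius transformations (a nonidentity element of $\PSL_2(\mathbb{R})$ fixes at most one point of $\mathbb{H}$, so Baire is not even needed), and the forward direction by lifting each automorphism of $X$ through the universal covering $\mathbb{H}\to X$ to realize $\Gamma$ as a finite union of cosets of the deck group $K$, whence discreteness and cocompactness follow as you describe.
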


Now, as an abstract group, every Fuchsian group $\Gamma$ can be described by its signature (even up to isomorphism). A signature consists of a nonnegative integer called the orbit genus followed by a list of integers greater than or equal to $2$ called the periods. One result that our procedure uses for combinatorial purposes is the Riemann-Hurwitz formula, given in Theorem \ref{rhf} below. This formula relates the signature of $\Gamma$ with the index of a normal subgroup $K$ of $\Gamma$ and the orbit genus of $K$.

\begin{thm}\text{\cite[Chapter 1.4]{Breuer}}\label{rhf}
Suppose $\Gamma$ is a Fuchsian group with signature $(g;\, m_1, m_2,\ldots, m_r)$ and $K$ is a torsion-free normal subgroup of $\Gamma$ of finite index. Let $G$ be a finite group with $G\cong \Gamma/K$ and $X$ be the Riemann surface given by $\mathbb{H}/K$. Then the genus of $X$ and the orbit genus of $\Gamma$ are related by the equation $$g(X)-1=|G|\left(g\left(\Gamma\right)-1\right)+ \frac{|G|}{2}\sum_{i=1}^r \left(1-\frac{1}{m_i}\right).$$
\end{thm}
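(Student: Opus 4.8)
The plan is to realize the quotient map $\pi \colon X = \mathbb{H}/K \to Y = \mathbb{H}/\Gamma$ as a branched covering of compact Riemann surfaces and to apply the classical Riemann--Hurwitz formula for Euler characteristics. First I would observe that since $K \trianglelefteq \Gamma$ has finite index $|G|$, the map $\pi$ is a regular (Galois) branched covering with deck group $G \cong \Gamma/K$; in particular it has degree $n := |G|$. The target $Y$ is the compact Riemann surface underlying the orbifold $\mathbb{H}/\Gamma$, and as such it has genus $g(\Gamma)$. The branch points of $\pi$ are exactly the images in $Y$ of the points of $\mathbb{H}$ with nontrivial $\Gamma$-stabilizer; by the definition of the signature $(g(\Gamma);\, m_1,\dots,m_r)$ these are $r$ points $y_1,\dots,y_r$, where a point of $\mathbb{H}$ lying over $y_i$ has $\Gamma$-stabilizer cyclic of order $m_i$.

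Next I would pin down the ramification data. Because $K$ is torsion-free it acts freely on $\mathbb{H}$, so no point of $X$ is a cone point and the ramification of $\pi$ over $y_i$ is governed entirely by the $\Gamma$-stabilizer. Since $\pi$ is regular, all points of the fiber $\pi^{-1}(y_i)$ share a common ramification index $e_i$, and the local model — a cyclic group of order $m_i$ acting on a disc by rotation, with orbifold chart $z \mapsto z^{m_i}$ — shows $e_i = m_i$; hence $|\pi^{-1}(y_i)| = n/m_i$. Away from $y_1,\dots,y_r$ the covering is unramified.

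Substituting into Riemann--Hurwitz, and using $2 - 2g$ for Euler characteristics, the formula $\chi(X) = n\,\chi(Y) - \sum_{p \in X}(e_p - 1)$ becomes
\begin{equation*}
2 - 2g(X) = n\bigl(2 - 2g(\Gamma)\bigr) - \sum_{i=1}^{r} \frac{n}{m_i}\,(m_i - 1).
\end{equation*}
Since $\frac{n}{m_i}(m_i - 1) = n\bigl(1 - \tfrac{1}{m_i}\bigr)$, rearranging and dividing by $2$ gives
\begin{equation*}
g(X) - 1 = n\bigl(g(\Gamma) - 1\bigr) + \frac{n}{2}\sum_{i=1}^{r}\Bigl(1 - \frac{1}{m_i}\Bigr),
\end{equation*}
which is the claimed identity.

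The step I expect to be the main obstacle is the middle one: justifying that the ramification index over each $y_i$ equals the period $m_i$ and is constant along the fiber. This is precisely where both hypotheses on $K$ are used — normality gives regularity of the covering (hence a well-defined $e_i$), and torsion-freeness guarantees that $X$ contributes no cone points of its own — and it rests on the local structure of the $\Gamma$-action near a fixed point. An alternative route that sidesteps the fiberwise bookkeeping is hyperbolic area: the Gauss--Bonnet theorem yields $\operatorname{area}(\mathbb{H}/\Gamma) = 2\pi\bigl(2g(\Gamma) - 2 + \sum_{i=1}^{r}(1 - 1/m_i)\bigr)$ and $\operatorname{area}(\mathbb{H}/K) = 2\pi(2g(X) - 2)$, and multiplicativity of area under the index-$n$ inclusion $K \le \Gamma$ gives the formula after dividing by $4\pi$.
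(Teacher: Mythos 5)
Your proof is correct. Note that the paper does not prove this statement at all --- it is quoted from Breuer's book --- so there is no in-paper argument to compare against; your derivation via the degree-$|G|$ regular branched covering $\mathbb{H}/K \to \mathbb{H}/\Gamma$, with ramification index $m_i$ over the $i$-th branch point (using torsion-freeness of $K$ to kill the stabilizers upstairs and normality to make the index constant on fibers), followed by the classical Euler-characteristic form of Riemann--Hurwitz, is exactly the standard proof given in the cited sources, and your Gauss--Bonnet/area alternative is the other standard route. The algebra checks out as written.
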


Suppose there is an epimorphism $\Phi:\Gamma\rightarrow G$. We say that $\Phi$ is a \emph{surface kernel epimorphism} if the kernel of $\Phi$ is a Fuchsian surface group. The following lemma gives a bound for the order of such a finite group $G$, supposing the existence of a surface kernel epimorphism from a Fuchsian group $\Gamma$ to $G$.

\begin{lem}\text{\cite[p. 16]{Breuer}}\label{skebig}
Suppose there exists a surface kernel epimorphism $\Phi$ from the group $\Gamma$ with signature $(g_0;\, m_1, m_2, \ldots, m_r)$ to a finite group $G$ such that the kernel of $\Phi$ is torsion-free and g$(\ker\Phi)=g \geq 2$.  Then we have the following:

\begin{itemize}
\item[(a)] If $g_0 > 0$ or $r \geq 5$, then $|G| \leq 4(g-1)$.
\item[(b)] If $r=4$, then $|G| \leq 12(g-1)$.
\item[(c)] If $|G| \geq 24(g-1)$, then $g_0=0$ and $r=3$, and $\Gamma$ has one of the following signatures in Table 1 with the corresponding group size for G.
\end{itemize}

\begin{table}[h!!!!!!!!!!!!!!!!!!!!!!!!!!!!!!]
{\small \label{sigsforlargeorders}
\begin{center}
\begin{tabular}{|c|p{0.6in}|}
\hline $(g_0;m_1, m_2, m_3)$  & \multicolumn{1}{c|}{$|G|$}\\ \hline
$(0;2,3,7)$     & $84(g-1)$\\
$(0;2,3,8)$     & $48(g-1)$\\
$(0;2,4,5)$     & $40(g-1)$\\
$(0;2,3,9)$     & $36(g-1)$\\
$(0;2,3,10)$    & $30(g-1)$\\
$(0;2,3,11)$    & $\frac{132}{5}(g-1)$\\
$(0;2,3,12)$    & $24(g-1)$\\
$(0;2,4,6)$     & $24(g-1)$\\
$(0;3,3,4)$     & $24(g-1)$\\
\hline
\end{tabular}
\end{center}
}
\begin{center}
\caption{Signatures for ``Large'' Group Orders}
\end{center}
\end{table}
\end{lem}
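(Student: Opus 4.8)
The plan is to turn the Riemann--Hurwitz formula of Theorem~\ref{rhf} into an exact expression for $|G|$ and then reduce the whole lemma to a lower bound on a single nonnegative quantity. Applying Theorem~\ref{rhf} with $g(X)=g$ and $g(\Gamma)=g_0$ and solving for $|G|$ gives
\[
|G| \;=\; \frac{g-1}{\mu}, \qquad
\mu \;:=\; (g_0-1) + \frac{1}{2}\sum_{i=1}^{r}\left(1-\frac{1}{m_i}\right).
\]
Since $g\ge 2$, the left-hand side is a positive integer, so $\mu>0$. From here every assertion of the lemma is a statement about how small the positive number $\mu$ can be: part (a) amounts to $\mu\ge\tfrac14$, part (b) to $\mu\ge\tfrac1{12}$ (the case $g_0>0,\ r=4$ being already covered by (a)), and part (c) to a classification of the signatures with $\mu\le\tfrac1{24}$. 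The facts I will use repeatedly are that each period satisfies $m_i\ge 2$, hence $1-\tfrac1{m_i}\in[\tfrac12,1)$, and that $\mu$ is monotone increasing in $g_0$, in $r$, and in each $m_i$.

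For (a) and (b) the argument is a short minimization. If $g_0\ge 2$ then $\mu\ge g_0-1\ge 1$; if $g_0=1$ then $\mu>0$ forces $r\ge 1$, and the minimum $\mu=\tfrac14$ is attained at the signature $(1;2)$; if $g_0=0$ and $r\ge 5$, then keeping five periods and setting each to $2$ gives $\mu\ge -1+\tfrac54=\tfrac14$. This proves (a). For (b) we have $r=4$ and, by (a), may assume $g_0=0$; the periods $(2,2,2,2)$ give $\mu=0$, which is forbidden, and the next admissible choice $(2,2,2,3)$ gives $\mu=\tfrac1{12}$, which is therefore the minimum. This proves (b).

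For (c) I will first pin down $g_0$ and $r$, then enumerate. The hypothesis $|G|\ge 24(g-1)$ says exactly $\mu\le\tfrac1{24}$; by (a) and (b) this excludes $g_0>0$, $r\ge 5$, and $r=4$, and for $g_0=0$ the signatures with $r\le 2$ have $\mu\le 0$, so they are excluded by positivity. Hence $g_0=0$ and $r=3$, so $\mu=\tfrac12\bigl(1-\tfrac1{m_1}-\tfrac1{m_2}-\tfrac1{m_3}\bigr)$, and the conditions $0<\mu\le\tfrac1{24}$ become
\[
\frac{11}{12}\;\le\;\frac{1}{m_1}+\frac{1}{m_2}+\frac{1}{m_3}\;<\;1 .
\]
Ordering $m_1\le m_2\le m_3$, the lower bound forces $m_1\le 3$, and a case split on $m_1\in\{2,3\}$ followed by a split on $m_2$, with the strict upper bound used to cap the largest period in each branch, leaves exactly the nine triples in Table~1; substituting each into $|G|=(g-1)/\mu$ recovers the stated orders. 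I expect this concluding enumeration to be the only step requiring care: it is entirely elementary, but one must keep to the ordering convention so that no signature is missed or double-counted, and use $\tfrac1{m_1}+\tfrac1{m_2}+\tfrac1{m_3}<1$ to terminate each case.
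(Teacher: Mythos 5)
Your proof is correct. The paper does not prove this lemma itself --- it cites it from Breuer's book --- and your argument (solving Riemann--Hurwitz for $|G|=(g-1)/\mu$, bounding the positive quantity $\mu$ from below in each case, and enumerating the signatures with $0<\mu\le\tfrac{1}{24}$, equivalently $\tfrac{11}{12}\le\tfrac{1}{m_1}+\tfrac{1}{m_2}+\tfrac{1}{m_3}<1$) is exactly the standard derivation given in that reference; the final case split does produce precisely the nine listed triples with the stated orders.
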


\subsection{{\sf GAP} Small Groups Library}
The main tool which our procedure uses for classifying the groups of a given strong symmetric genus is the computer algebra system {\sf GAP} (Groups, Algorithms and Programming). In particular, we utilize the {\sf GAP} Small Groups Library. The Small Groups Library allows access to all groups of ``small'' orders.  The  groups  are sorted by their orders and they are listed up to isomorphism;  that  is,  for  each  of the available orders a complete and
irredundant list of isomorphism type representatives of groups is given. Therefore, each group in the library can be identified by its {\sf GAP} Group ID, a pair of numbers $[s,k]$, where $s$ is the order of the group and $k$ is an integer between 1 and the number of groups of order $s$. 

\section{Description of Procedure}

For a given integer $g$, the purpose of our program is to determine all groups with strong symmetric genus $g$.  The program is written to perform computations in three sections: the first uses combinatorics to generate lists of possible groups and signatures, the second searches for the appropriate surface kernel epimorphisms, and the third checks to make sure that groups satisfying all necessary conditions have not appeared with smaller genus.  All of these computations are based on the input $g$; the output is a list of all groups with strong symmetric genus $g$.  Throughout this entire procedure, we process groups in terms of their {\sf GAP} group ID $[s,k]$.  Functions we employ are denoted by a \texttt{teletype} font.

\subsection{Generation of Signatures}

The first section of our code generates lists of signatures and finite groups.  This is done through combinatorial methods, established by the Riemann-Hurwitz formula [Theorem \ref{rhf}], which bounds the number of possible finite groups by bounding their possible size [Lemma \ref{skebig}].  Also from this formula, bounds on the number of periods in a given signature are established.  This bound follows from the fact that we are only interested in surface kernel epimorphisms: we can thus restrict the numbers that can occur in the period of a signature associated with a particular group order to those numbers which divide the group order.  These two facts first enable us to work with a finite list of possibilities and then help us streamline the process in this section of the code, cutting down the number of possibilities that must be tested for surface kernel epimorphisms.  The only groups and signatures which are passed out of this section of the code are those that satisfy the Riemann-Hurwitz formula for the given genus $g$.

\subsection{Existence of Surface Kernel Epimorphisms}

Once the list of candidates for $\Gamma$ and $G$ has been generated, the second section of our code either finds a surface kernel epimorphism $\Phi : \Gamma\rightarrow G$ or eliminates the possibility of one.  We employ the $\sf{GAP}$ library function \texttt{GQuotients} to find these surface kernel epimorphisms.  As this is a very time expensive function, we first seek to eliminate as many candidates as possible before entering this function.  Based on the structures of the groups $\Gamma$ and $G$ in consideration, we are able to remove certain groups by performing a series of tests.

The first of these tests is the function \texttt{abelianInvariantsCondition}, which eases the process for abelian $G$.  Given two groups $S$ and $T$, it performs an arithmetic check to determine whether or not any kind of epimorphism can exist from $S$ onto $T$.  Theorem \ref{abinvcond} is a more precise statement of the underlying theory of \texttt{abelianInvariantsCondition}, which reduces the problem of finding an epimorphism to a problem in terms of \emph{p}-groups.  In the theorem, we denote $S/\,[S,S]$ by $\Gamma$ and $T/\,[T,T]$ by $G$.

\begin{thm}\label{abinvcond}

If $\Gamma$ is a finitely-generated abelian group and $G$ is a finite abelian group, then $\Gamma \cong \mathbb{Z}^r \oplus H$ for some $r \geq 0$ and a finite group $H$, and there is an epimorphism $\Phi: \Gamma \rightarrow G$ if and only if, for each prime divisor $p$ of $|G|$, there is an epimorphism $\Phi_p: \mathbb{Z}^r \oplus H_p \rightarrow G_p$, where $H_p$ and $G_p$ are the unique Sylow p-subgroups of $H$ and $G$, respectively.

\end{thm}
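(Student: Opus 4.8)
The plan is to prove the two assertions separately: first the structural decomposition $\Gamma \cong \mathbb{Z}^r \oplus H$, and then the equivalence characterizing the existence of an epimorphism $\Phi:\Gamma \to G$ in terms of the Sylow subgroups. The first part is immediate from the fundamental theorem of finitely-generated abelian groups, which gives $\Gamma \cong \mathbb{Z}^r \oplus T(\Gamma)$ where $T(\Gamma)$ is the (finite) torsion subgroup; set $H = T(\Gamma)$. Since $H$ is a finite abelian group, it decomposes canonically as the direct sum of its Sylow $p$-subgroups $H_p$, and likewise $G \cong \bigoplus_{p \mid |G|} G_p$. These decompositions are the ones referenced in the statement, and they are unique, which is what makes the subsequent argument work.

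For the equivalence, the forward direction is the easy one: given an epimorphism $\Phi:\Gamma \to G$, I would restrict attention to a fixed prime $p$ and compose $\Phi$ with the projection $G \twoheadrightarrow G_p$ (killing all the other Sylow summands). Because $\mathbb{Z}^r$ maps onto arbitrarily divisible pieces and because any homomorphism from a $q$-group ($q \neq p$) into the $p$-group $G_p$ is trivial, this composite, when restricted to $\mathbb{Z}^r \oplus H_p$, is already surjective onto $G_p$; that restriction is the desired $\Phi_p$. Concretely, $\Phi(H_q) \subseteq G$ has order coprime to $p$, so its image under the projection to $G_p$ is trivial, and hence the image of $\mathbb{Z}^r \oplus H_p$ alone must be all of $G_p$.

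For the reverse direction, suppose for each prime $p \mid |G|$ we have an epimorphism $\Phi_p:\mathbb{Z}^r \oplus H_p \to G_p$. I would like to assemble these into a single epimorphism $\Gamma \to G$. The natural attempt is to use the free rank: pick generators $e_1,\dots,e_r$ of $\mathbb{Z}^r$, and for each $p$ let $\Phi_p$ be determined by where it sends the $e_i$ and the elements of $H_p$. Define $\Phi$ on $H_p \subseteq H$ to agree with the $H_p$-part of $\Phi_p$ (landing in $G_p \subseteq G$), which is consistent since $H = \bigoplus_p H_p$; and define $\Phi(e_i) = \sum_{p} (\text{$e_i$-component of }\Phi_p) \in \bigoplus_p G_p = G$. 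One checks this is a well-defined homomorphism (the only relations among the $e_i$ and the torsion generators are the torsion relations, which are respected prime-by-prime), and it is surjective because its image contains each $G_p$ — the $G_p$-component of the image already contains $\Phi_p(\mathbb{Z}^r \oplus H_p) = G_p$ by projecting onto that summand. Hence $\Phi$ is onto $G = \bigoplus_p G_p$.

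The main obstacle, such as it is, is bookkeeping rather than mathematical depth: one must be careful that the free abelian part $\mathbb{Z}^r$ is shared among all the primes (it is not split up), so the $\Phi_p$ are not defined on a direct-sum decomposition of $\Gamma$ itself but on overlapping subgroups $\mathbb{Z}^r \oplus H_p$. The cleanest way to handle this is to phrase the whole argument via the universal property: a homomorphism out of $\Gamma \cong \mathbb{Z}^r \oplus H$ is the same data as an arbitrary choice of $r$ elements of $G$ together with a homomorphism $H \to G$, and a homomorphism $H \to G$ is the same as a family of homomorphisms $H_p \to G_p$ by the Sylow decompositions of both sides. Under this dictionary the condition "surjective onto $G$" decouples prime-by-prime into "surjective onto $G_p$," and the theorem follows formally. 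I expect the write-up to be short once this reformulation is in place.
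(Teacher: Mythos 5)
Your proposal is correct and follows essentially the same route as the paper: decompose $\Gamma$ by the fundamental theorem of finitely generated abelian groups, obtain each $\Phi_p$ by composing $\Phi$ with the projection $\Pi_p : G \to G_p$ and restricting to $\mathbb{Z}^r \oplus H_p$, and assemble the $\Phi_p$ into $\Phi$ by sending each free generator $e_i$ to $\sum_p \Phi_p(e_i)$ and each $H_p$ into $G_p$ via $\Phi_p$. The one place you are terser than the paper is the final surjectivity check: the inference from ``$\Pi_p(\operatorname{im}\Phi) \supseteq G_p$ for every $p$'' to ``$G_p \subseteq \operatorname{im}\Phi$'' genuinely uses that the orders $|G_p|$ are pairwise coprime (the analogous claim fails for the diagonal subgroup of $(\mathbb{Z}/p)^2$, which surjects onto each factor without being everything); the paper makes this explicit by taking a preimage $y$ of a generator $x$ of $G_p$ under $\Phi_p$, observing $\Phi(y)=x+z$ with the order $N$ of $z$ coprime to $p$, and multiplying by $\tilde{N}$ with $\tilde{N}\equiv 0 \pmod{N}$ and $\tilde{N}\equiv 1 \pmod{p^k}$, whereas your structural shortcut is justified by the standard fact that a subgroup of a finite abelian group is the direct sum of its Sylow subgroups, which coincide with its images under the $\Pi_p$ --- either way the argument closes.
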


\begin{proof}
Let $p$ be a prime dividing $|G|$ and suppose $\Phi: \Gamma \rightarrow G$ is an epimorphism.  Clearly $\Phi(H_p) \subseteq G_p$.  Let $\Pi_p: G \rightarrow G_p$ be the canonical projection onto $G_p$ and let $\Phi_p = \Pi_p \circ (\Phi |_{\mathbb{Z}^r \oplus H_p})$.  Then $\Phi_p$ is a homomorphism, and it is surjective because for any $x \in G_p$, we can take its pre-image $y$ under $\Phi$ which will be of the form $y = y_0 + \sum_{q \hspace{2pt} \mid \hspace{2pt} |H|}(y_q)$ where $y_0 \in \mathbb{Z}^r$ and $y_q\in{}H_q$ for each prime $q$ dividing $|H|$.  Then if $p$ divides $|H|$ we have $\Phi_p(y_0 +y_p) = x$, and otherwise we have $\Phi_p(y_0) = x$.

Now suppose there are epimorphisms $\Phi_p: \mathbb{Z}^r \oplus H_p \rightarrow G_p$ for all primes $p$ dividing $|G|$. Define a homomorphism $\Phi: \mathbb{Z}^r \oplus H \rightarrow G$ as follows: If $t$ is a generator of $\mathbb{Z}^r$, then $t$ is mapped to $\sum_p\Phi_p(x)$ by $\Phi{}$. If $t$ is a generator of $H_p$ for some $p$ dividing $|G|$,  then $t$ is mapped to $\Phi_p(x)$ by $\Phi{}$.

Then $\Phi$ is a homomorphism.  To show that it is surjective, it suffices to show that the generators of $G_p$ are in the image of $\Phi$ for all $p$ dividing $|G|$.  Let $x$ be a generator of $G_p$ for some $p$ and choose a pre-image $y$ of $x$ under $\Phi_p$.  Then $y=y_0+y_p$ with $y_0 \in \mathbb{Z}^r$, $y_p \in H_p$ and $\Phi(y) = \Phi_p(y_p) + \sum_q\Phi_q(y_0) = \Phi_p(y) + \sum_{q \neq p} \Phi_q(y_0)$.  The order of $\Phi_p(y)$ is a power of $p$, say $p^k$.  Then the order of $\sum_{q \neq p}\Phi_q(y_0)$, say $N$, is coprime to $p$.

Now choose $\tilde{N}$ with the properties $\tilde{N} \equiv 0$ (mod N) and $\tilde{N} \equiv 1$ (mod $p^k$).  Then $\Phi(\tilde{N} \cdot y) = \Phi_p(y) = x$.
\end{proof}

Another test we perform prior to calling \texttt{GQuotients} is checking whether groups on our list have strong symmetric genus $2 \leq \sigma^0 \leq g-1$, and removing from consideration those that do.  We have stored the data for groups of $2 \leq \sigma^0 \leq g-1$ from previous iterations of our procedure, so at this point, we discard groups with $2 \leq \sigma^0 \leq g-1$ by scanning through these databases.

Our third test is to handle the case when $G$ is abelian. We applied the following generalization of \cite[Theorem 4]{Harv}:
\begin{thm}\text{\cite[Theorem 9.1]{Breuer}}\label{abeliancheck}
Let $\Gamma$ be a Fuchsian group with signature $(g_0; m_1, \ldots, m_r)$ and let $M = \lcm(m_1, m_2,\ldots,m_r)$. There is a surface kernel epimorphism from $\Gamma$ onto a finite abelian group $G$ if and only if the following conditions are satisfied.
	\begin{itemize}
	\item[(o)] There exists an epimorphism from $\Gamma$ onto $G$;
	\item[(i)] $\lcm(m_1, m_2, \ldots, m_{i-1},m_{i+1}, \ldots, m_r) = M$ for all $i$;
	\item[(ii)] $M$ divides the exponent $\exp(G)$ of $G$, and if $g_0 = 0, M = \exp(G)$;
	\item[(iii)] $r\neq 1$ and if $g_0 = 0, r \geq 3$;
	\item[(iv)] If $M$ is even and only one of the abelian invariants of $G$ is divisible by the maximum power of $2$ dividing $M$, then the number of $m_i$ divisible by the maximum power of $2$ dividing $M$ is even. 
	\end{itemize}
\end{thm}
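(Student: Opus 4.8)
The plan is to pass first to the abelianization. Since $G$ is abelian, every homomorphism $\Gamma\to G$ --- in particular every surface kernel epimorphism $\Phi$ --- factors through $\Gamma^{\mathrm{ab}}=\Gamma/[\Gamma,\Gamma]$, and from the standard presentation of a Fuchsian group with signature $(g_0;m_1,\dots,m_r)$ --- free generators $a_i,b_i$ ($1\le i\le g_0$) together with $x_1,\dots,x_r$, and relations $x_j^{m_j}=1$ and $\prod_i[a_i,b_i]\prod_j x_j=1$ --- abelianizing gives $\Gamma^{\mathrm{ab}}\cong\mathbb{Z}^{2g_0}\oplus A$, where $A=\langle\bar x_1,\dots,\bar x_r\mid m_j\bar x_j=0,\ \sum_j\bar x_j=0\rangle$. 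Because the torsion elements of $\Gamma$ are exactly the conjugates of powers of the $x_j$ and conjugation is invisible to a map into an abelian group, $\ker\Phi$ is torsion-free precisely when $\mathrm{ord}(\Phi(\bar x_j))=m_j$ for every $j$. So the theorem is equivalent to the assertion that one can choose $v_j:=\Phi(\bar x_j)\in G$ with $\mathrm{ord}(v_j)=m_j$ and $\sum_j v_j=0$, together with arbitrary $w_1,\dots,w_{2g_0}\in G$ (the images of the $a_i,b_i$), so that $\langle v_1,\dots,v_r,w_1,\dots,w_{2g_0}\rangle=G$, if and only if (o)--(iv) hold. Note that (o) is exactly this same statement with the requirement ``$\mathrm{ord}(v_j)=m_j$'' relaxed to ``$m_j v_j=0$'', so (i), (ii) and (iv) are precisely the conditions that allow one to upgrade a generating configuration to one realizing the prescribed orders.

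For necessity I would reason as follows. Condition (o) is immediate. For (iii): if $r=1$ the relation $\sum_j\bar x_j=0$ forces $\bar x_1=0$, hence $\mathrm{ord}(v_1)=1\neq m_1$; the remaining clause $g_0=0\Rightarrow r\ge3$ rules out the signatures $(0;m_1,m_2)$ and $(0;m_1)$, for which $\Gamma$ is not an infinite hyperbolic Fuchsian group. For (i): from $v_i=-\sum_{j\neq i}v_j$ we get $m_i=\mathrm{ord}(v_i)\mid\lcm_{j\neq i}m_j$, whence $M=\lcm(m_i,\lcm_{j\neq i}m_j)=\lcm_{j\neq i}m_j$ for every $i$. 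For (ii): each $m_j=\mathrm{ord}(v_j)$ divides $\exp(G)$, so $M\mid\exp(G)$, and when $g_0=0$ every element of $A$ is killed by $M$, so $\exp(G)\mid\exp(A)\mid M$ forces $M=\exp(G)$. Condition (iv) is the one subtle point. Suppose $M$ is even, let $2^s$ be its largest power-of-two divisor, and suppose exactly one abelian invariant of $G$ is divisible by $2^s$; then $G$ has a unique cyclic Sylow-$2$ summand $\mathbb{Z}/2^e$ of exponent $\ge2^s$, with $e\ge s$ by (ii), while all other Sylow-$2$ summands have exponent $<2^s$. Let $\pi\colon G\to\mathbb{Z}/2^e$ be the projection onto that summand. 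If $2^s\mid m_j$ then $\nu_2(m_j)=s$ (since $m_j\mid M$), so the $2$-part of $v_j$ has order exactly $2^s$; as the other Sylow-$2$ summands have smaller exponent, this forces $\pi(v_j)=2^{e-s}u_j$ with $u_j$ odd, whereas if $2^s\nmid m_j$ then $\mathrm{ord}(\pi(v_j))<2^s$, i.e. $\pi(v_j)\equiv0\pmod{2^{e-s+1}}$. Applying $\pi$ to $\sum_j v_j=0$ and reducing modulo $2^{e-s+1}$ gives $0\equiv2^{e-s}\cdot\#\{j:2^s\mid m_j\}\pmod{2^{e-s+1}}$, so the number of $m_j$ divisible by $2^s$ is even.

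For sufficiency I would first reduce to the case that $G$ is a $p$-group. Writing $G=\bigoplus_p G_p$, each of (o), (i), (ii), (iv) passes to the ``$p$-part'' problem obtained by replacing every $m_j$ by $p^{\nu_p(m_j)}$ --- condition (i) then says that for each prime $p$ at least two of the $\nu_p(m_j)$ attain $\nu_p(M)$, and (o) for $G$ yields (o) for $G_p$ by composing with the projection $G\to G_p$ --- and a solution for each $p$ assembles coordinatewise into a solution for $G$, since orders multiply over distinct primes while the relation $\sum_j v_j=0$ and the generation condition hold componentwise. So assume $G=\mathbb{Z}/p^{c_1}\oplus\cdots\oplus\mathbb{Z}/p^{c_k}$ with $c_1\le\cdots\le c_k$ and $m_j=p^{a_j}$, $a:=\max_j a_j$, setting $v_j=0$ for the indices with $a_j=0$. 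In this language (i) says at least two of the $a_j$ equal $a$, (ii) says $a\le c_k$ with equality when $g_0=0$, and for $p=2$, (iv) says that if $c_{k-1}<a$ then $\#\{j:a_j=a\}$ is even. I would then exhibit an explicit generating tuple $(v_1,\dots,v_r)$ of elements of the prescribed orders with $\sum_j v_j=0$, using the free generators $w_i$ (when $g_0>0$) to finish generating $G$ and invoking (o) to know there is room to generate at all, with the two indices carrying $a_j=a$ serving as adjustable slots that absorb whatever correction makes the sum vanish while still displaying an element of order $p^a$.

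The step I expect to be the main obstacle is this explicit construction in the case $p=2$. When $c_{k-1}<a$, the element of order $2^a$ in $G$ is, modulo the lower summands, essentially unique, so each index with $a_j=a$ contributes a fixed amount $2^{c_k-a}$ (times a unit) to the top coordinate, and the sum of those contributions vanishes in the relevant quotient exactly when their number is even --- which is precisely what condition (iv) provides; when $c_{k-1}\ge a$ there is enough slack among the order-$2^a$ elements to kill the sum irrespective of parity. Organizing this bookkeeping cleanly --- for instance by induction on the number $k$ of abelian invariants of $G$, peeling off the top cyclic summand at each step and tracking how the prescribed orders and the parity constraint redistribute --- is where the real work lies; the odd-$p$ case and the case $g_0>0$ should be comparatively routine once that is in place.
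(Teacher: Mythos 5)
The paper does not actually prove this statement: it is quoted verbatim from Breuer's book (Theorem 9.1 there), which in turn generalizes Harvey's theorem on cyclic groups, so there is no in-paper proof to compare against. Judged on its own, your proposal gets the easy half right and leaves the hard half undone. The reduction to the abelianization, the observation that torsion-freeness of $\ker\Phi$ is equivalent to $\mathrm{ord}(\Phi(x_j))=m_j$ for all $j$, and the necessity arguments for (o)--(iv) are all correct; in particular your derivation of (iv) by projecting onto the unique cyclic $2$-summand of exponent at least $2^s$ and reducing the relation $\sum_j v_j=0$ modulo $2^{e-s+1}$ is a clean and complete argument.

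The genuine gap is sufficiency. After the (valid) reduction to $p$-groups, you state that you ``would then exhibit an explicit generating tuple'' and candidly identify the $p=2$ construction as ``where the real work lies'' --- but that construction \emph{is} the theorem. Everything you have actually verified is the routine direction; the content of Harvey--Maclachlan--Breuer is precisely that (o)--(iv) suffice, and this requires producing, for $G_p=\mathbb{Z}/p^{c_1}\oplus\cdots\oplus\mathbb{Z}/p^{c_k}$, elements $v_1,\dots,v_r$ of exact orders $p^{a_j}$ with $\sum_j v_j=0$ which (together with $2g_0$ free images) generate, and showing that the only obstruction to absorbing the correction term into the two slots with $a_j=a$ is the parity obstruction of (iv) when $c_{k-1}<a$ and $p=2$. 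You have not shown, for instance, how condition (o) (a rank/generation constraint on $G$ relative to $2g_0+r-1$ generators of bounded order) interacts with the requirement that the orders be \emph{exact} rather than merely dividing $m_j$, nor carried out the induction on $k$ you gesture at. As written, the argument establishes only the ``only if'' direction; to complete it you need to write out the explicit tuple (or the peeling-off induction) and verify generation and the sum relation in all cases, with the $p=2$, $c_{k-1}<a$ case treated separately using (iv).
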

Observe that all groups which have passed \texttt{abelianInvariantsCondition} will automatically satisfy condition (o). Another possible method to handle this case is to compute the strong symmetric genus of $G$ under the restriction $g \geq 2$ using the formula from \cite[Theorem 4]{Mac}.

In order to eliminate more groups from our list, we again use the fact that a surface kernel epimorphism must preserve the orders of the generators. Let $G$ be the target group of a surface kernel epimorphism from a group with a signature of the form $(0;p, p, \ldots, p)$, where $p$ is some prime.  We compute the conjugacy classes of $G$ and select those classes which consist of elements of order $p$.  The normal closure of the selected conjugacy classes is calculated and if it is not equal to the entire group $G$, then the signature can be discarded.

At this point all remaining groups enter \texttt{SpecializedGQuotients} where a surface kernel epimorphism is constructed, if possible. \texttt{SpecializedGQuotients} is a modification of the {\sf GAP} Library function \texttt{GQuotients} which constructs all possible epimorphisms from one group onto another.  To enhance the efficiency of our code, we made two changes to \texttt{GQuotients}: first, we only search for surface kernel epimorphisms; that is, we send generators of a group to elements in the target group of the exact same order.  Second, we stop constructing surface kernel epimorphisms after one is found since we only require the existence of one.

\subsection{Checks for Groups of Strong Symmetric Genus Zero and One}

In order to ensure that $g$ is the strong symmetric genus of each group on our list, it must be the minimum genus of any surface on which a given group acts as a group of automorphisms.  We ensure that this is so by checking that each group on our list has not already occurred for smaller strong symmetric genus.  Recall that groups of strong symmetric genus $2\leq\sigma^0\leq g-1$ have already been eliminated.  For strong symmetric genus 0, we eliminate groups of the form $C_n$ and $D_{2n}$ for all $n$, $S_4$, $A_4$ and $A_5$.  A group $G$ has strong symmetric genus 1 if it is the image of a Fuchsian group represented by one of the following signatures: $(1; -)$, $(0; 2, 2, 2, 2)$, $(0; 3, 3, 3)$, $(0; 2, 4, 4)$ or $(0; 2, 3, 6)$ \cite[$\S 3.4$]{Breuer}.  It is possible that some $G$ on our list meets this criterion and therefore has strong symmetric genus 1.  We thus run \texttt{GQuotients} again, eliminating any groups for which this is the case.  This leaves a list of all groups of strong symmetric genus $g$.

\section{{\sf GAP} Results and Conclusions}

This procedure produces a classification of all groups of strong symmetric genus up to twenty-five.  The data, as well as the code, can be found at 
	\begin{center}
	\texttt{http://math.arizona.edu/$\sim$sp2008/}
	\end{center} 
Table~\ref{genusresults} below gives $\nu(g)$, the number of groups for each strong symmetric genus up to twenty-five. 

\begin{table}[h!!!!!!!!!!!!!!!]
{\small 
\begin{center}
\begin{tabular}{|r|l| r |r|l| r |r|l| r |r|l|}
\hline $g$  & $\nu(g)$ & & $g$ & $\nu(g)$ & & $g$ & $\nu(g)$ && $g$ & $\nu(g)$\\ \hline
$2$ & 6 	& \hbox{\hskip 8pt}& 
	$8$ & 12 	& 
		\hbox{\hskip 8pt}& $14$ & 14	& 
			\hbox{\hskip 8pt}& $20$ & 15\\ 
$3$ & 10 	& \hbox{\hskip 8pt}& $9$ & 55 	& \hbox{\hskip 8pt}& $15$ & 31	& \hbox{\hskip 8pt}& $21$ & 86\\
$4$ & 10 	& \hbox{\hskip 8pt}& $10$ & 36 	& \hbox{\hskip 8pt}& $16$ & 23	& \hbox{\hskip 8pt}& $22$ & 28\\
$5$ & 22 	& \hbox{\hskip 8pt}& $11$ & 29 	& \hbox{\hskip 8pt}& $17$ & 129	 &\hbox{\hskip 8pt}& $23$ & 23\\
$6$ & 11 	& \hbox{\hskip 8pt}& $12$ & 14 	& \hbox{\hskip 8pt}& $18$ & 16	& \hbox{\hskip 8pt}& $24$ & 17\\
$7$ & 22 	& \hbox{\hskip 8pt}& $13$ & 50 	& \hbox{\hskip 8pt}& $19$ & 62	& \hbox{\hskip 8pt}& $25$ & 167\\
\hline
\end{tabular}
\end{center}
}
\caption{Number of Groups $\nu(g)$ for Each Strong Symmetric Genus $g$}
\label{genusresults}
\end{table}

\section{Comments and Open Problems}

The obvious open problem is to continue this classification further. The reason our data ends for strong symmetric genus 25 is that our code depends on the {\sf GAP} Small Groups Library, which does not contain a full classification of groups of order 2016. As Lemma \ref{skebig} demonstrates, groups of order 2016 are first encountered when looking at surfaces of genus 25. This Lemma also states that such groups \emph{must} arise from the signature $(0; 2, 3, 7)$. It is well-known that every finite nontrivial quotient of the triangle group $(0; 2, 3, 7)$ is a Hurwitz group \cite{cond}. Thus, if there were a surface kernel epimorphism from this group to a group $G$ of order 2016, then $G$ must be a Hurwitz group, and thus a perfect group \cite{cond}. Using the {\sf GAP} Perfect Groups Library, it can be checked that there exist no perfect groups of order 2016. Thus, this signature and group can be eliminated from the possibilities for strong symmetric genus 25, and our classification of groups with strong symmetric genus 25 is complete.

The classification could be continued by incorporating this method into our code. All Hurwitz groups of order less than $10^{6}$ have been classified \cite{cond}, and this along with the {\sf GAP} Small Groups Library could most likely classify all groups with strong symmetric genus up to 42 (since $48\cdot41<2000)$. Using the {\sf GAP} Perfect Groups Library, one could continue the classification through strong symmetric genus 60.

In \cite[$\S7$]{Zimm2}, May and Zimmerman introduced a number $\nu(g)$ which is the number of groups with strong symmetric genus $g$.  They have already computed $\nu(g)$ for $g$ up to four.  In Figure \ref{fig:graph}, we plot the values of $\nu(g)$ for $g$ up to twenty-five. For all $g\geq 2$, Hurwitz's Bound [Lemma \ref{skebig}] guarantees that $\nu(g)$ is finite. Furthermore, for all $g$, $\nu(g)$ is at least one \cite{Zimm3}.

\begin{figure}[h]
\centering
	\begin{overpic}[scale=.6]
	{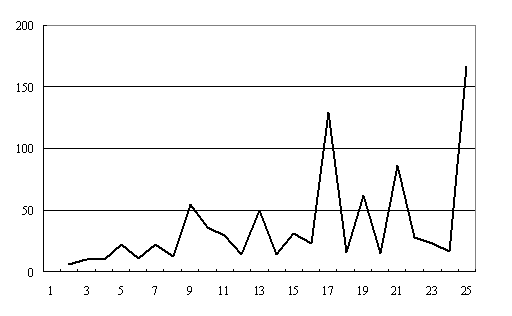}
		\put(-5,30){$\nu(g)$}
		\end{overpic}
		\caption{Plot of $\nu(g)$ for $2\leq g\leq 25$.}
		\label{fig:graph}
\end{figure}	

Notice in Figure \ref{fig:graph} that the number of groups for strong symmetric genera of the form $2^n+1$ for $n \in \mathbb{N}$ is higher than that for all preceding strong symmetric genera.  For example, there are 55 groups of strong symmetric genus nine.  The number of groups for all previous strong symmetric genera is significantly less than 55.  We believe this trend will continue for higher values of $2^n+1$.

\section{Acknowledgements} 

This paper arose from the 2008 Arizona Summer Program, which is sponsored by The National Science Foundation (EMSW21-VIGRE Award 0602173) and the University of Arizona.  The project was advised by Dr. Thomas Breuer, and additional thanks are also due to program director Klaus Lux and to Alexander Hulpke. Finally, we thank the referee for careful reading and helpful comments.

\bibliographystyle{alpha}
\bibliography{articlessgbib}

\end{document}